\newtheorem{theorem}{Theorem}
\newtheorem{lemma}{Lemma}
\newtheorem{example}{Example}
\theoremstyle{remark}
\newtheorem*{remark}{Remark}
\DeclareMathOperator*{\E}{E}
\DeclareMathOperator*{\Var}{Var}
\DeclareMathOperator*{\esssup}{ess\,sup}
\DeclareMathOperator*{\essinf}{ess\,inf}
\newcommand{\C}{\mathbb C}
\newcommand{\N}{\mathbb N}
\newcommand{\R}{\mathbb R}
\newcommand{\Z}{\mathbb Z}
\newcommand{\eps}{\varepsilon}
\newcommand{\abs}[1]{\left| {#1} \right|}
\newcommand{\ep}[2][]{\E_{{#1}}\left[ {#2} \right]}
\newcommand{\prb}[2][]{\Pr_{#1}\left[ #2 \right]}
\newcommand{\set}[1]{\left\{ {#1} \right\}}
\DeclarePairedDelimiterX{\infdivx}[2]{(}{)}{{#1} \,\delimsize\|\, {#2}}
\providecommand*{\cupdot}{%
  \mathbin{%
    \mathpalette\@cupdot{}%
  }%
}
\newcommand*{\@cupdot}[2]{%
  \ooalign{%
    $\m@th#1\cup$\cr
    \hidewidth$\m@th#1\cdot$\hidewidth
  }%
}
\providecommand*{\bigcupdot}{%
  \mathop{%
    \mathpalette\@bigcupdot{}%
  }%
}
\newcommand*{\@bigcupdot}[2]{%
  \ooalign{%
    $\m@th#1\bigcup$\cr
    \hidewidth$\m@th#1\cdot$\hidewidth
  }%
}
\newcommand{\bigmodeproduct}[1]{
  \mathop{
    \mathchoice{\vcenter{\hbox{{\huge $\m@th\mkern-2mu\times\mkern-2mu$}$_{#1}$}}}          
               {\vcenter{\hbox{{\LARGE $\m@th\mkern-2mu\times\mkern-2mu$}$_{#1}$}}}         
               {\vcenter{\hbox{{$\m@th\mkern-2mu\times\mkern-2mu$}$_{#1}$}}}                
               {\vcenter{\hbox{{\footnotesize $\m@th\mkern-2mu\times\mkern-2mu$}$_{#1}$}}}  
  }\displaylimits
}
\title{On Sums of Monotone Random Integer Variables}
\author{Anders Aamand\thanks{Basic Algorithms Research Copenhagen (BARC), University of Copenhagen, Denmark. Emails: {\tt aa@di.ku.dk}, {\tt jakn@di.ku.dk}, and {\tt mikkel2thorup@gmail.com}. BARC is supported by the VILLUM Foundation grant 16582.} \and Noga Alon\footnote{Department of Mathematics, Princeton University, Princeton,
New Jersey, USA and Schools of Mathematics and Computer Science,
Tel Aviv University, Tel Aviv, Israel.
Email: {\tt nalon@math.princeton.edu}.
Research supported in part by
NSF grant DMS-1855464, BSF grant 2018267
and the Simons Foundation.} \and Jakob B\ae k Tejs Knudsen$^*$ \and Mikkel Thorup$^*$}
\begin{document}

\maketitle
\begin{abstract}
We say that a random integer variable $X$ is \emph{monotone} if the modulus of the characteristic function of $X$ is decreasing on $[0,\pi]$. This is the case for many commonly encountered variables, e.g., Bernoulli, Poisson and geometric random variables. In this note, we provide estimates for the probability that the sum of independent monotone integer variables attains precisely a specific value. We do not assume that the variables are identically distributed.  Our estimates are sharp when the specific value is close to the mean, but they are not useful further out in the tail. By combining with the trick of \emph{exponential tilting}, we obtain sharp estimates for the point probabilities in the tail under a slightly stronger assumption on the random integer variables which we call \emph{strong monotonicity}.
\end{abstract}
\section{Introduction}
In this note we provide sharp estimates for the probability that the sum of independent (not necessarily identically distributed) integer-valued random variables attains precisely a specific value. 
Our estimates hold under a fairly general assumption on the properties of the random variables, which for example is satisfied for Bernoulli, Poisson and geometric random variables. We proceed to describe these properties now.

Recall that for a real random variable $X$, the \emph{characteristic function} of $X$ is the map $f_X:\R \to \C$ given by $f_X(\lambda)=\E[e^{i\lambda X}]$. We say that a real random variable $X$ is \emph{monotone} if $|f_X|$ is decreasing on $[0,\pi]$. 
In the first part of this note (\Cref{sec:near-mean}), we provide estimates for the point probabilities of a sum of independent monotone random integer variables, $X=\sum_{j\in [k]}X_j$. To be precise, for any given $t\in \Z$, we estimate the probability $\Pr[X=t]$. Our estimates are sharp whenever $t$ is close to the mean $\E[X]$, but they are not useful further out in the tail. To handle point probabilities in the tail, we require a slightly stronger assumption on the random variables which we now describe.  

For a random integer variable  $X$ we define $I_X=\{\theta\in \R: \E[e^{\theta X}]<\infty\}$, to consist of those $\theta\in \R$ for which the moment generating function of $X$ is defined. We note that $I_X$ is an interval with $0\in I_X$. For $\theta \in I_X$, we may define the \emph{exponentially tilted} random variable $X_\theta$ by $\Pr[X_\theta=t]=\frac{\Pr[X=t]e^{\theta t}}{\E[e^{\theta X}]}$ for $t\in \Z$. We say that $X$ is \emph{strongly monotone} if (1) $I_X\neq \{0\}$ and (2) $X_\theta$ is monotone for each $\theta \in I_X$. In the second part of this note (\Cref{sec:in-tail}), we use the trick of exponential tilting to provide estimates for the point probabilities of a sum of independent strongly monotone random integer variables, $X=\sum_{j\in [k]}X_j$, which are also sharp in the tail.

It follows by direct computation that Bernoulli, Poisson, and geometric random variables are monotone, and moreover, that exponentially tilting these variables again yields Bernoulli, Poisson and geometric variables. In particular, these variables are all strongly monotone, so our results give sharp estimates for the point probabilities of the sum of (a mix of) such variables. In~\Cref{sec:in-tail}, we provide examples of the estimates that can be obtained for such a sum using our results. 

In the note we will consider the following setting.
Let $k$ be an integer and $(X_j)_{j\in [k]}$ independent integer-valued random variables with $\ep{X_j} = \mu_j$ and $\Var[X_j]=\sigma_j^2$ for $j\in [k]$\footnote{We define $[k] = \set{0, \ldots, k - 1}$}.
Let $X = \sum_{i \in [k]} X_i$, and further $\mu = \sum_{j \in [k]} \mu_j$ and $\sigma^2 = \sum_{j \in [k]}\sigma_j^2$ be respectively the expectation and variance of $X$.
The main result of the note is the following theorem.
\begin{theorem}\label{lem:density-geometric}
    There exists a universal constant $c$, such that if $X$ is monotone, 
    then for every $t$ for which
    $\mu + t \sigma$ is an integer, the probability that $X$ is precisely
    $\mu + t \sigma$ satisfies,
    \begin{align}
            \abs{\prb{X = \mu + t \sigma} - \frac{1}{\sqrt{2 \pi} \sigma}e^{-t^2/2}}
               \leq c\left(\frac{\sum_{j \in [k]} \ep{\abs{X_j - \mu_j}^3}}{\sigma^3}   \right)^2.\label{eq:main-estimate}         
    \end{align}
\end{theorem}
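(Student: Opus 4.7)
The approach is Fourier-analytic. Write $Y = X - \mu$ and $g(\lambda) := e^{-\sigma^2 \lambda^2/2}$. Since $X$ is integer-valued and $\mu + t\sigma \in \Z$, Fourier inversion together with the Fourier representation of the Gaussian density at $t\sigma$ yields
\begin{align*}
D := \prb{X = \mu + t\sigma} - \frac{e^{-t^2/2}}{\sqrt{2\pi}\,\sigma} &= \frac{1}{2\pi}\int_{-\pi}^{\pi}(f_Y(\lambda) - g(\lambda))\,e^{-i\lambda t\sigma}\,d\lambda \\
&\quad - \frac{1}{2\pi}\int_{\abs{\lambda}>\pi}g(\lambda)\,e^{-i\lambda t\sigma}\,d\lambda.
\end{align*}
The Gaussian tail is $O(e^{-\sigma^2\pi^2/2}/\sigma)$ and is negligible unless $\sigma$ is very small; in that regime $cL^2$ already exceeds $1$ and the bound is free from $\prb{\cdot}\le 1$.

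Let $L := \sigma^{-3}\sum_j \rho_j$ with $\rho_j := \ep{\abs{X_j - \mu_j}^3}$. A standard Berry--Esseen-type estimate (Taylor-expand $\log f_{Y_j}$ around $0$, sum over $j$, exponentiate, and use $\rho_j \ge \sigma_j^3$ to absorb the quartic remainder into the cubic one) gives
\begin{align*}
\abs{f_Y(\lambda) - g(\lambda)} \le C\,L\sigma^3\abs{\lambda}^3\, e^{-\sigma^2\lambda^2/3}
\end{align*}
for $\abs{\lambda} \le \lambda_0 := 1/(CL\sigma)$; integrating (via the substitution $u = \sigma\lambda$) contributes $O(L/\sigma)$. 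For $\lambda_0 < \abs{\lambda} \le \pi$, the hypothesis that $X$ is monotone gives $\abs{f_Y(\lambda)} = \abs{f_X(\lambda)} \le \abs{f_X(\lambda_0)}$, and the small-$\lambda$ estimate at $\lambda_0$ shows this is stretched-exponentially small in $1/L^2$, dominated by any polynomial in $L$; the corresponding Gaussian piece is controlled identically. Combining yields $\abs{D} = O(L/\sigma)$.

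To upgrade $O(L/\sigma)$ to the claimed $O(L^2)$, I use that integer-valuedness of each $X_j$ forces $\rho_j \ge c''\sigma_j^2$ for a universal $c'' > 0$, whence $L\sigma = \sigma^{-2}\sum_j \rho_j \ge c''$ and therefore $L/\sigma \le L^2/c''$. To establish this inequality, set $\alpha_j := \{\mu_j\}$, assume without loss of generality that $\alpha_j \le 1/2$ (otherwise reflect $X_j \mapsto -X_j$), and split according to whether $X_j = \floor{\mu_j}$. On the complementary event $\abs{Y_j} \ge 1 - \alpha_j \ge 1/2$, so $\abs{Y_j}^3 \ge (1-\alpha_j) Y_j^2$ pointwise. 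If at least half of $\sigma_j^2$ comes from this event, one gets $\rho_j \ge \sigma_j^2/4$ directly; otherwise, Cauchy--Schwarz applied to the identity $\ep{Y_j\,\indicator{X_j\ne\floor{\mu_j}}} = \alpha_j\,\prb{X_j = \floor{\mu_j}}$ forces $\prb{X_j \ne \floor{\mu_j}} \ge 1/2$ and $\sigma_j^2 \le 1/4$, so that $\rho_j \ge (1-\alpha_j)^3\,\prb{X_j \ne \floor{\mu_j}} \ge 1/16 \ge \sigma_j^2/4$.

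The Fourier skeleton is routine local-CLT fare; the principal subtlety is the final upgrade from $O(L/\sigma)$ to $O(L^2)$. The integer-valued inequality $\rho_j \ge c\sigma_j^2$ is essential---for absolutely continuous variables $L/\sigma$ and $L^2$ are incomparable and the stated $L^2$ error would not hold in that generality---and appears to be the only place where integer-valuedness of the individual $X_j$, beyond integer-valuedness of the sum (used in Fourier inversion), enters the argument. A secondary technical point is to calibrate the threshold $\lambda_0$ so that the cubic Taylor estimate, the monotonicity-based tail, and the Gaussian tail all integrate to compatible orders; since $\abs{e^{-i\lambda t\sigma}} = 1$, no $t$-dependence propagates through these bounds and the estimate is automatically uniform in $t$.
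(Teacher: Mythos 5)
Your proof follows the same Fourier-inversion route as the paper: split $[-\pi,\pi]$ at a central cutoff, Taylor-expand the characteristic function near $0$, use monotonicity of $|f_X|$ on $[0,\pi]$ to control the tail, and invoke an integer-specific lower bound $\E[|X_j-\mu_j|^3]\ge c\,\sigma_j^2$ (the paper's Lemma~\ref{prop:thirdmoment}, with constant $1/10$) to upgrade $O(L/\sigma)$ to $O(L^2)$, where your $L$ is the paper's $\tau$. The only differences are in the implementation: your cutoff $\lambda_0=1/(CL\sigma)$ replaces the paper's $\eps=\sqrt{8\log(1/\tau)}/\sigma$, and you establish the third-moment inequality by conditioning on $X_j=\lfloor\mu_j\rfloor$ and applying Cauchy--Schwarz rather than the paper's direct weighted splitting of $\sum_k p_k|k-\mu|^3$ — both choices serve the same purpose and lead to the same conclusion.
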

\begin{remark}
    We note that if each $X_j$ is monotone, then $X$ is as well.
    Indeed, the characteristic function of $X$ can be factorized as $f_X(\lambda)=\prod_{j\in [k]}f_{X_j}(\lambda)$.
    In particular,~\Cref{lem:density-geometric} holds when each of the variables $(X_j)_{j\in[k]}$ is monotone.
\end{remark}

Our result is reminiscent of the Berry-Esseen theorem, but instead of bounding the distance between the cumulative function of $X$ and the cumulative function of the normal distribution as the Berry-Esseen theorem does,
our result bounds the distance between the density function of $X$ and the density function of the normal distribution.
This setting has been studied before in the context of large deviation theory, e.g., by Blackwell and Hodges~\cite{Blackwell1959} and by Iltis~\cite{iltis1995sharp} in the $d$-dimensional case.
They do not require $X$ to be monotone but they only consider the case where $(X_j)_{j\in [k]}$ are identically distributed and are interested in the asymptotical behavior when $k\to \infty$.
In particular the distribution of the variables $(X_j)_{j\in [k]}$ cannot depend on $k$.
In this work we are not interested in such asymptotic bounds and our result is a uniform bound for monotone variables.

\section{Point Probabilities Near the Mean}\label{sec:near-mean}
The goal of this section is to prove \Cref{lem:density-geometric}, but before diving into the proof, we provide some examples of random variables for which the condition of the lemma is satisfied.
Let $p\in [0,1]$ and $\lambda>0$.
Let $Y$ be a Bernoulli variable with $\Pr[Y=1]=1-\Pr[Y=0]=p$, let $Z$ be geometric with $\Pr[Z=k]=p^k(1-p)$ for $k\in \N_0$, and let $W$ be Poisson with $\Pr[W=k]=\lambda^ke^{-\lambda}/k!$ for $k\in \N_0$.
Let $f_Y$, $f_Z$, $f_W$ be the characteristic functions for $Y$, $Z$, and $W$.
Then for $\lambda\in \R$,
\begin{align*}
f_Y(\lambda)=1-p+pe^{i\lambda}, \ \ f_Z(\lambda)=\frac{1-p}{1-pe^{i\lambda}}, \ \  \text{ and } \ \ f_W(\lambda)=e^{\lambda(e^{i\lambda}-1)}.
\end{align*}
Thus, 
\begin{align*}
|f_Y(\lambda)|^2&=(1-p+pe^{i\lambda})(1-p+pe^{-i\lambda})=1+2p(1-p)(\cos \lambda -1) \\
|f_Z(\lambda)|^2&=\left(\frac{1-p}{1-pe^{i\lambda}}\right)\left(\frac{1-p}{1-pe^{-i\lambda}}\right)=\frac{(1-p)^2}{1+p^2-2p\cos \lambda}, \quad \text{and} \\
|f_W(\lambda)|&=e^{\lambda(\cos \lambda-1)},
\end{align*}
which are all decreasing functions on $[0,\pi]$. 

We also need the following simple result concerning random integer variables.
\begin{lemma}\label{prop:thirdmoment}
Let $Y$ be an integer random variable $Y$ with third moment. Then 
$$
\E[|Y-\E[Y]|^3]\geq \Var[Y]/10.
$$
\end{lemma}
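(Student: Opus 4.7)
The plan is to reduce to the nearest integer to $\mu := \E[Y]$ and exploit the integer lattice structure of $Y$. Let $n_0$ be an integer nearest to $\mu$, write $\mu = n_0 + \delta$ with $|\delta|\le 1/2$, and set $q = \Pr[Y = n_0]$. The key integer fact is that for every integer $n \ne n_0$,
\[
|n - \mu| \;\ge\; |n - n_0| - |\delta| \;\ge\; 1/2,
\]
so on the event $\{Y \ne n_0\}$ one has the pointwise bound $|Y-\mu|^3 \ge \tfrac12 (Y-\mu)^2$.

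Taking expectations and using that $(Y-\mu)^2 = \delta^2$ on $\{Y = n_0\}$, this yields
\[
\E\!\left[|Y-\mu|^3\right] \;\ge\; \tfrac12\!\left(\sigma^2 - \delta^2 q\right),
\]
where $\sigma^2 = \Var[Y]$. The natural obstacle is that $\delta^2 q$ can be comparable to $\sigma^2$ when $Y$ concentrates at $n_0$, so the bound above is not yet of the claimed form. To absorb the loss I would apply Cauchy--Schwarz to the identity $\delta = \E[(Y-n_0)\mathbf 1_{Y \ne n_0}]$ (valid because $Y-n_0$ vanishes on $\{Y=n_0\}$), obtaining
\[
\delta^2 \;\le\; \E\!\left[(Y-n_0)^2 \mathbf 1_{Y \ne n_0}\right] \cdot (1-q) \;=\; (\sigma^2 + \delta^2)(1-q),
\]
which rearranges to $\delta^2 q \le \sigma^2 (1-q)$. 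Substituting gives $\sigma^2 - \delta^2 q \ge \sigma^2 q$ and hence
\[
\E\!\left[|Y-\mu|^3\right] \;\ge\; \tfrac12\,\sigma^2 q.
\]

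The proof concludes with a short dichotomy on $q$. If $q \ge 1/5$, the last display already gives $\E[|Y-\mu|^3] \ge \sigma^2/10$. If instead $q < 1/5$, then since each $Y \ne n_0$ contributes at least $1/4$ to $(Y-\mu)^2$, we get $\sigma^2 \ge (1-q)/4 > 1/5$, so $\sigma > 1/\sqrt{5} > 1/10$, and Lyapunov's inequality $\E[|Y-\mu|^3]^{1/3} \ge \sigma$ yields $\E[|Y-\mu|^3] \ge \sigma^3 = \sigma\cdot\sigma^2 > \sigma^2/10$. The main subtlety I expect is precisely this interplay of regimes: the integer-gap estimate alone loses the mass concentrated at $n_0$, Cauchy--Schwarz absorbs that loss only when $q$ is not too small, and the residual highly-spread case must be cleaned up by the generic Lyapunov comparison of second and third absolute moments.
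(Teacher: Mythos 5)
Your proof is correct. You exploit the same integer-gap fact as the paper---that $|k-\mu|\ge 1/2$ away from the nearest integer, hence $|Y-\mu|^3\ge\tfrac12(Y-\mu)^2$ off that point---but the mechanism for recovering the lost variance at the nearest integer $n_0$ is genuinely different. The paper normalizes to $0\le\mu\le 1/2$ so that $n_0=0$, uses a second pointwise bound $|k-\mu|^3\ge k/8$ for $k>0$ together with the one-sided moment inequality $\sum_{k>0}p_k k\ge\mu\ge\mu^2$ to account for the missing term $p_0\mu^2$, and then takes a convex combination of the two pointwise bounds (weight $\lambda=1/5$) to land on $\Var[Y]/10$ in a single chain. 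You instead absorb the loss via Cauchy--Schwarz, showing $\delta^2 q\le\sigma^2(1-q)$, which yields $\E[|Y-\mu|^3]\ge\tfrac12\sigma^2 q$, and then split on whether $q\ge 1/5$; in the low-concentration regime you observe $\sigma^2\ge(1-q)/4>1/5$ and close with Lyapunov's inequality $\E[|Y-\mu|^3]\ge\sigma^3$. The paper's argument is slightly tighter as a one-shot calculation; yours is arguably more transparent about \emph{why} the bound holds (concentration near $n_0$ and spread trade off), at the cost of invoking Cauchy--Schwarz, Lyapunov, and a dichotomy. Both deliver the stated constant $1/10$.
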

\begin{proof}
We may clearly assume that $-1/2<\E[Y]\leq 1/2$ by replacing $Y$ wit $Y-a$ for an appropriate integer $a$. We only consider the case $0\leq \E[Y]\leq 1/2$ as the other case follows from a symmetric argument. Let $p_k=\Pr[y=k]$ for $k\in \Z$ and $\mu=\E[Y]$. Then $\sum_{k=1}^\infty p_k k\geq \mu \geq \mu^2$. Moreover, for $k<0$, it holds that $|k-\mu|^3\geq (k-\mu)^2$ and for $k>0$, $|k-\mu|^3\geq \frac{1}{2}(k-\mu)^2\geq \frac{1}{8}k$. Letting $\lambda=1/5$, it follows that  
\begin{align*}
	\E[|Y-\mu|^3]\geq& \sum_{k<0}p_k(k-\mu)^2+\frac{\lambda}{2} \sum_{k>0}p_k(k-\mu)^2+\frac{1-\lambda}{8}\sum_{k>0}p_kk\\
	\geq& \frac{\lambda}{2}\sum_{k\in \Z\setminus \{0\}}p_k(k-\mu)^2+\frac{1-\lambda}{8}p_0\mu^2=\Var[Y]/10.
	\end{align*}
\end{proof}

\begin{proof}[Proof of \Cref{lem:density-geometric}]
We start by noting that we may assume that $\sigma^2>C$ for a sufficiently large constant $C$. Indeed, by~\Cref{prop:thirdmoment}, we have that $\frac{1}{\sigma^4}\sum_{j\in [k]}\E[|X_j-\mu_j|^3]\geq \frac{1}{10\sigma^2}$, so the result is trivial when $\sigma^2\leq C$ (by choosing $c$ sufficiently large).

    Now the proof proceeds, similarly to proofs of the Berry-Esseen theorem and uses simple properties of the Fourier transformation of $X$. Let $f_j$ be the characteristic function of $X_j-\mu_j$ for $j\in [k]$ and let $F$ be the characteristic function of $X-\mu$. Then  
    \[
        F(\lambda) = \prod_{j \in [k]} f_j(\lambda) = \sum_{n \in \Z} \prb{X = n}e^{i (n - \mu) \lambda} \; .
    \]
    For non-zero integers $s$, it holds that $\frac{1}{2\pi} \int_{-\pi}^\pi e^{i s \lambda} d\lambda=0$ whereas 
    $\frac{1}{2\pi} \int_{-\pi}^\pi e^{i s \lambda} d\lambda=1$ if $s=0$. It follows that for any integer $n \in \Z$,
    \[
        \prb{X = n} = \frac{1}{2\pi} \int_{-\pi}^\pi F(\lambda) e^{- i (n - \mu) \lambda} d\lambda \; .
    \]
    In particular, if $\mu + t \sigma$ is an integer, then
    \[
        \prb{X = \mu + t\sigma} = \frac{1}{2\pi} \int_{-\pi}^\pi F(\lambda) e^{- i t \sigma \lambda} d\lambda \; .
    \]

    We define $\tau = \frac{\sum_{j \in [k]} \ep{\abs{X_j - \mu_j}^3}}{\sigma^3}$ noting that we may assume that $\tau \le c_0$ for a sufficiently small constant $c_0$ as otherwise the result is trivial.
    Split the interval $[-\pi, \pi]$ into three parts, $I_1 = [-\eps, \eps]$, $I_2 = [\eps, \pi]$, and $I_3 = [-\pi, -\eps]$. We will prove that if $\eps=\frac{\sqrt{8\log 1/\tau}}{\sigma} $, then
    \begin{align}
        \abs{F(\lambda) e^{- i t \sigma \lambda}} &= \abs{F(\lambda)} \leq \tau^2    \quad \text{for all $\lambda \in I_2 \cup I_3$ and}, \label{eq:large-estimate} \\
        \int_{-\eps}^{\eps} F(\lambda) e^{- i t \sigma \lambda} d\lambda &= \frac{1}{\sqrt{2 \pi} \sigma}e^{-t^2/2} + O(\tau^2). \label{eq:Gaussian-estimate}
    \end{align}
    We note that we may assume that $\eps\leq \pi$. Indeed, by~\Cref{prop:thirdmoment}, $\frac{\log 1/\tau}{\sigma^2}\leq \frac{\log(8\sigma)}{\sigma}$, and $\sigma$ is assumed to be sufficiently large. The desired result thus follows immediately from \cref{eq:large-estimate} and~\eqref{eq:Gaussian-estimate}. 
    
    We start by proving~\eqref{eq:large-estimate}. It is a general fact that the characteristic function $f_Y$ of a random variable $Y$ is Hermitian, i.e., $f_Y(-t)=\overline{f_Y(t)}$. In particular,
    $\abs{F(\lambda)} = \abs{F(-\lambda)}$, so it is enough to prove that 
    $\abs{F(\lambda)} \le  \tau^2 $ for $\lambda \in I_2$.
    As $|F|$ is decreasing on $[0, \pi]$, it in fact suffices to prove that $\abs{F(\eps)} \le  \tau^2$.    
    Now another standard fact about the characteristic function $f_Y$ of a random variable $Y$ is that for any $n$,
    \begin{align}\label{eq:taylor}
    \left|f_Y(\lambda)-\sum_{j=0}^n \frac{(i\lambda)^j}{j!}\E[Y^j]\right|\leq \frac{|\lambda|^{n+1}\E[|Y|^{n+1}]}{(n+1)!}.
    \end{align}
By Jensen's inequality, $\sigma_j^2 \le (\E[|X_j-\mu_j|^3])^{2/3}$, so it follows that 
\begin{align}\label{eq:not-large}
\eps^2\sigma_j^2\le \left(\eps^3 \sum_{j \in [k]} \ep{\abs{X_j - \mu_j}^3}\right)^{2/3}=(\eps^3 \sigma^3 \tau)^{2/3}\le 8 \log(1/\tau) \tau^{2/3}\leq 1,
\end{align}
where the last inequality used that $\tau\leq c_0$ for a sufficiently small constant $c_0$.
We may thus apply~\eqref{eq:taylor} with $n=2$ to conclude that
    \begin{align*}
        \abs{f_j(\eps)} \leq 1 - \frac{\eps^2}{2} \sigma_j^2 + \ep{\abs{X_j - \mu_j}^3}\frac{\eps^3}{6} 
            \le e^{- \frac{\eps^2}{2} \sigma_j^2 +\ep{\abs{X_j - \mu_j}^3}\frac{\eps^3}{6}}.
    \end{align*}
    Thus, for $\lambda \in I_2$,
    \begin{align*}
        \abs{F(\lambda) e^{- i t \sigma \lambda}}
            = \abs{F(\lambda)}
            \le \abs{F(\eps)}
            \le e^{-\tfrac{\eps^2}{2} \sigma^2 + \left(\sum_{j \in [k]} \ep{\abs{X_j - \mu_j}^3} \right) \frac{\eps^3}{6}}=e^{-\eps^2\sigma^2(1/2-\sigma\eps\tau/6)}.
    \end{align*}
    As $\sigma \eps \tau=\tau\sqrt{8\log 1/\tau}\leq 3/2$, it therefore follows that for $\lambda \in I_2$,
    \begin{align}
        \abs{F(\lambda)}
            \le e^{-\frac{\eps^2\sigma^2}{4}}
            = \tau^2 \; ,
    \end{align}
    which proves \eqref{eq:large-estimate}.


    Turning to~\eqref{eq:Gaussian-estimate}, we again use the Taylor expansion formula to get 
    \begin{align*}
        f_j(\lambda) = 1 - \frac{\lambda^2}{2} \sigma_j^2 + \ep{\abs{X_j - \mu_j}^3} \lambda^3g_j(\lambda),
    \end{align*}
for some (complex-valued) function $g_j(\lambda)$ with $|g_j(\lambda)| \le 1/6$ for all $\lambda$.
As in~\eqref{eq:not-large}, for $\abs{\lambda} \le \eps$, 
    \begin{align}\label{eq:third-moment}
        \ep{\abs{X_j - \mu_j}^3} \abs{\lambda}^3
        \le 1,
    \end{align}
and 
\begin{align}\label{eq:comparing-terms}
\lambda^4\sigma_j^4 \le (|\lambda|^3\E[|X_j-\mu_j|^3])^{4/3}\leq |\lambda|^3\E[|X_j-\mu_j|^3]\leq 1.
\end{align}
It follows that $|f_j(\lambda)-1|\leq 5/6$. Now for $z\in \C$ with $|z|\leq 5/6$ it holds that $1+z=\exp(z+O(z^2))$. Also, if $a,b\in \C$ satisfy that $|a|^2\leq |b|\leq 1$, then $|a+b|^2\leq 2(|a|^2+|b|^2)\leq 4|b|$. Combining these observations with~\eqref{eq:third-moment} and~\eqref{eq:comparing-terms} we find that  
    \begin{align*}
        f_j(\lambda)= e^{- \frac{\lambda^2}{2} \sigma_j^2 + \ep{\abs{X_j - \mu_j}^3} O(\lambda^3)}.
    \end{align*}
    It follows that 
    \begin{align*}
        F(\lambda) e^{- i t \sigma \lambda}
            = e^{- \frac{\lambda^2}{2} \sigma^2 + \left( \sum_{j \in [k]} \ep{\abs{X_j - \mu_j}^3} \right) O(\lambda^3)} e^{- i t \sigma \lambda}
    \end{align*}
    We then get that 
    \begin{align}\begin{split}\label{eq:characteristic-Taylor}
        F(\lambda) e^{- i t \sigma \lambda}
            &= e^{- \frac{\lambda^2}{2} \sigma^2 + \left( \sum_{j \in [k]} \ep{\abs{X_j - \mu_j}^3} \right) O(\lambda^3)} e^{- i t \sigma \lambda}
            \\&= e^{- \frac{\lambda^2}{2} \sigma^2}\left(1 + \left( \sum_{j \in [k]} \ep{\abs{X_j - \mu_j}^3} \right) O(\lambda^3) \right) e^{- i t \sigma \lambda}
    \end{split}\end{align}
    for $\abs{\lambda} \le \eps$.
    Now we get that
    \begin{equation}\begin{split}\label{eq:Gaussian-third-moment}
        &\frac{1}{2\pi}\int_{-\eps}^{\eps} \abs{e^{- \frac{\lambda^2}{2} \sigma^2}  \left( \sum_{j \in [k]} \ep{\abs{X_j - \mu_j}^3} \right) O(\lambda^3) e^{- i t \sigma \lambda}} d \lambda
            \\&\quad\quad\quad\quad=  \frac{1}{\pi}\left( \sum_{j \in [k]} \ep{\abs{X_j - \mu_j}^3} \right) \int_{0}^{\eps} e^{- \frac{\lambda^2}{2} \sigma^2} O(\lambda^3) d \lambda
            \\&\quad\quad\quad\quad=  \frac{1}{\pi}\left( \frac{\sum_{j \in [k]} \ep{\abs{X_j - \mu_j}^3}}{\sigma^4} \right) \int_{0}^{\sqrt{8\log 1/\tau}} e^{- \frac{s^2}{2}} O(s^3) d s
            \\&\quad\quad\quad\quad=  O\left( \frac{\sum_{j \in [k]} \ep{\abs{X_j - \mu_j}^3}}{\sigma^4} \right)
            \\& \quad\quad\quad\quad =O(\tau^2)
    \end{split}\end{equation}
    Here we used the substitution $s = \lambda \sigma$, and the last step uses~\Cref{prop:thirdmoment}. Again using the same substitution we get that
    \begin{align*}
        \frac{1}{2\pi}\int_{-\eps}^{\eps} e^{- \frac{\lambda^2}{2} \sigma^2} e^{- i t \sigma \lambda} d \lambda
            = \frac{1}{2\pi \sigma}\int_{-\sqrt{8\log 1/\tau}}^{\sqrt{8\log 1/\tau}} e^{- \frac{s^2}{2}} e^{- i t s} d s
    \end{align*}
    We clearly have that 
    \begin{align}\label{eq:Gaussian-large}
        \frac{1}{2\pi \sigma} \int_{|s| \ge \sqrt{8\log 1/\tau}} \abs{e^{- \frac{s^2}{2}} e^{- i t s}} d s
            \le O\left(\frac{\tau}{\sigma} \right)
            \le O\left( \frac{\sum_{j \in [k]} \ep{\abs{X_j - \mu_j}^3}}{\sigma^4} \right)=O(\tau^2)
    \end{align}
    And calculating the Fourier transform of function of $e^{- \frac{s^2}{2}}$ we get that
    \begin{align}\label{eq:Gaussian}
        \frac{1}{2\pi \sigma} \int_{-\infty}^{\infty} e^{- \frac{s^2}{2}} e^{-i t s} ds
            = \frac{1}{\sqrt{2 \pi} \sigma } e^{-t^2/2}
    \end{align}
    Combining \eqref{eq:characteristic-Taylor}, \eqref{eq:Gaussian-third-moment}, \eqref{eq:Gaussian-large}, and \eqref{eq:Gaussian} proves \eqref{eq:Gaussian-estimate}.
    This finishes the proof.
\end{proof}

\section{Point Probabilities in the Tail}\label{sec:in-tail}
As is,~\Cref{lem:density-geometric} is only useful when $|t\sigma|$ is not too large. Indeed, for large $|t|$, the term $\frac{1}{\sqrt{2\pi}\sigma}e^{-t^2/2}$ will typically be much smaller than the error term on the right hand side of~\cref{eq:main-estimate}. We now show that if our variables satisfy the stronger property of being strongly monotone, we may also obtain precise estimates for the point probabilities in the tail by combining with the trick of exponential tilting. 

Recall that we defined a real random variable $X$ to be strongly monotone if $I_X\neq \{0\}$ and $X_\theta$ is monotone for each $\theta \in I_X$. Here, $I_X=\{\theta \in \R: \E[e^{\theta X}]<\infty\}$ consisted of those $\theta$ for which the moment generating function of $X$ is defined, and $X_\theta$ was the exponentially tilted random variable defined by $\Pr[X_\theta =t]=\frac{\Pr[X=t]e^{\theta t}}{\E[e^{\theta X}]}$ for $t\in \Z$.

Many commonly encountered random variables have the property of being strongly monotone:
\begin{lemma}\label{prop:strong-monotone-ex}
Let $X$ be Bernoulli, $Y$ be geometric and $Z$ be Poisson. Then $X,Y$ and $Z$ are each strongly monotone.
\end{lemma}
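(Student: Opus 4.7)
The plan is to verify each case directly by showing that exponential tilting preserves the distributional family, and then invoking the monotonicity computations already carried out in \Cref{sec:near-mean}.

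First I would check that $I_X$, $I_Y$, $I_Z$ each contain a nonzero interval. A direct computation of the moment generating functions yields $\E[e^{\theta X}]=1-p+pe^\theta$ for all $\theta \in \R$ in the Bernoulli case, $\E[e^{\theta Y}]=(1-p)/(1-pe^\theta)$ for $\theta<\log(1/p)$ in the geometric case, and $\E[e^{\theta Z}]=e^{\lambda(e^\theta-1)}$ for all $\theta \in \R$ in the Poisson case. In particular $I_X=\R$, $I_Y=(-\infty,\log(1/p))$, and $I_Z=\R$, all of which are strictly larger than $\{0\}$.

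Next, for each $\theta$ in the corresponding interval, I would compute the tilted probabilities $\Pr[X_\theta=t]=\Pr[X=t]e^{\theta t}/\E[e^{\theta X}]$ and observe that the result is again a distribution from the same family, only with a different parameter. Explicitly, $X_\theta$ is Bernoulli with parameter $p'=pe^\theta/(1-p+pe^\theta)\in[0,1]$; $Y_\theta$ is geometric with parameter $p'=pe^\theta\in[0,1)$ (using $\theta<\log(1/p)$); and $Z_\theta$ is Poisson with parameter $\lambda'=\lambda e^\theta>0$. These are all routine algebraic simplifications.

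Finally, since the paper already established in the opening of \Cref{sec:near-mean} that Bernoulli, geometric, and Poisson random variables are monotone (by showing $|f_Y|^2$, $|f_Z|^2$, $|f_W|$ are decreasing on $[0,\pi]$, independently of the underlying parameter in its admissible range), I can conclude that $X_\theta$, $Y_\theta$, $Z_\theta$ are monotone for every $\theta$ in the respective interval $I_X$, $I_Y$, $I_Z$. This verifies both conditions in the definition of strong monotonicity, completing the proof. There is no real obstacle here; the only subtle point is to confirm that the tilted parameters $p'$, $p'$, $\lambda'$ lie in the valid parameter ranges so that the earlier monotonicity computations apply, which is immediate from the formulas above.
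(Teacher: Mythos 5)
Your proposal is correct and follows essentially the same route as the paper's (very terse) proof: both rest on the two observations that Bernoulli, geometric, and Poisson variables were already shown monotone in \Cref{sec:near-mean}, and that each of these families is closed under exponential tilting. You simply supply the explicit moment generating functions and tilted parameters ($p'=pe^\theta/(1-p+pe^\theta)$, $p'=pe^\theta$, $\lambda'=\lambda e^\theta$) that the paper leaves as a routine calculation, and your computations are correct.
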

\begin{proof}
We already saw that the classes of Bernoulli, geometric, and Poisson variables consists of monotone variables. The result follows by  calculating the point probabilities of the tilted variables (when they exists) and observing that each class is closed under exponential tilts.
\end{proof}

Now suppose $X=\sum_{j\in [k]}X_j$ is a sum of independent random integer variables and moreover that $X$ is not almost surely equal to a constant. 
We are interested in estimates for the probability $\Pr[X=t]$ for some $t\in \Z$.
Let $I_j=\{\theta \in \R:\E[e^{\theta X_j}]<\infty\}$ and $I=\{\theta \in \R:\E[e^{\theta X}]<\infty\}=\cap_{j\in [k]}I_j$. We note each $I_j$ and $I$ are intervals containing $0$.
We define\footnote{Recall that the essential infimum and supremum of a random variable $X$ are defined by $\essinf X=\sup\{t: \Pr[X<t]=0\}$ and $\esssup X=\inf\{t: \Pr[X>t]=0$\} which are values in $\R \cup \{-\infty,\infty\}$.} $A=\essinf X$ and $B=\esssup X$. 
Let further $\psi_X:I\to \R$ be the cumulant generating function defined by $\psi_X:\theta \mapsto \log(\E[e^{\theta X}])$. It is well known that $\psi_X$ is strictly convex and infinitely often differentiable for $\theta$ lying in the interior of $I$ with $\psi_X'(\theta)=\frac{\E[Xe^{\theta X}]}{\E[e^{\theta X}]}$. For $t\in \R$, we define $g(t)=\sup_{\theta\in I}(\theta t-\psi_X(\theta))$. Now it is a standard fact about the cumulant generating function that if $I$ contains a non-empty open interval (i.e., consists of more than a single point), then $\inf_{\theta \in I}\psi_X'(\theta)=A$ and $\sup_{\theta \in I}\psi_X'(\theta)=B$. If in particular $A<t<B$, there exists a $\theta_0$ in the interior of $I$ with $\psi_X'(\theta_0)=t$. Moreover, this $\theta_0$ is unique since $\psi_X$ is strictly convex. 

Now let $(Y_j)_{j\in [k]}$ be independent random variables obtained by tilting each $X_j$ by $\theta_0$ as above. Let further $Y=\sum_{j\in [k]}Y_j$. For $s\in \Z$, we define $A_s=\{z\in \Z^k:z_1+\cdots+z_k=s\}$. Then for any $t\in \Z$,
$$
\Pr[X=t]=\sum_{z\in A_t}\prod_{j\in [k]} \Pr[X_j=z_j]=\frac{\E[e^{\theta_0 X}]}{e^{\theta_0 t}}\sum_{z\in A_t}\prod_{j\in [k]} \Pr[Y_j=z_j]=\frac{\E[e^{\theta_0 X}]}{e^{\theta_0 t}} \Pr[Y=t],
$$
so $Y$ is simply the variable obtained by tilting $X$ by $\theta_0$. 
Moreover, by the choice of $\theta_0$,
$$
\E[Y]=\sum_{z\in \Z}\frac{\Pr[X=z]e^{\theta_0 z}z}{\E[e^{\theta_0 X}]}=\frac{\E[Xe^{\theta_0 X}]}{\E[e^{\theta_0 X}]}=\psi_X'(\theta_0)=t.
$$
Now the fact that $\E[Y]=t$, suggests using~\Cref{lem:density-geometric} to estimate the probability that $\Pr[Y=t]$. Doing so, we immediately obtain the following result.
\begin{theorem}\label{lemma:tail-points}
Assume that $X$ is strongly monotone and not almost surely equal to a constant. Moreover assume that $I\neq \{0\}$. Let $t$ be an integer with with $A<t<B$ and $\theta$ be the unique real in the interior of $I$ having $\psi_X'(\theta)=t$. Let $Y$ be the exponential tilt of $X$ by $\theta$. Then $\E[Y]=t$ and 
\begin{align}\label{eq:tail-point-estimate}
\Pr[X=t]=\frac{\E[e^{\theta X}]}{e^{\theta t}} \left(\frac{1}{\sqrt{2\pi}\sigma_Y}\pm O\left(\frac{\eta_Y^2}{\sigma_Y^6}\right)\right),
\end{align}
where $\sigma_Y^2=\Var[Y]$ and $\eta_Y=\sum_{j\in [k]}\E[|Y_j-\E[Y_j]|^3]$.
\end{theorem}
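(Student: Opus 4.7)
The plan is to reduce the estimate for $\Pr[X=t]$ directly to \Cref{lem:density-geometric} applied to the tilted sum $Y$, using the identities already derived in the discussion preceding the theorem. Those identities give us for free: the relation $\Pr[X=t] = \frac{\E[e^{\theta X}]}{e^{\theta t}}\Pr[Y=t]$; the fact that $Y = \sum_{j \in [k]} Y_j$ is a sum of independent integer-valued random variables (tilting the joint law of independent variables by a common $\theta$ preserves independence); and the computation $\E[Y] = \psi_X'(\theta) = t$.

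First I would verify that the hypotheses of \Cref{lem:density-geometric} are met by $Y$. This is precisely where strong monotonicity of $X$ enters: since $\theta$ lies in $I = I_X$, by definition $Y = X_\theta$ is monotone, which is the only distributional assumption required. Note that we do not need each $Y_j$ to be monotone individually, only the sum $Y$. Moreover, since tilting by $\theta$ preserves the support of $X$ and $X$ is assumed not to be almost surely constant, neither is $Y$, so $\sigma_Y > 0$ and the statement is meaningful.

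Next I would apply \Cref{lem:density-geometric} to $Y$ at the integer value $t$. Since $\E[Y] = t$, we may write $t = \mu_Y + 0 \cdot \sigma_Y$, so the parameter corresponding to ``$t$'' in \Cref{lem:density-geometric} is zero and the Gaussian factor collapses to $\frac{1}{\sqrt{2\pi}\sigma_Y}$. The lemma thus yields
\begin{align*}
\left|\Pr[Y = t] - \frac{1}{\sqrt{2\pi}\,\sigma_Y}\right| \leq c \left(\frac{\eta_Y}{\sigma_Y^3}\right)^2 = c\,\frac{\eta_Y^2}{\sigma_Y^6}.
\end{align*}
Multiplying this inequality through by the positive constant $\E[e^{\theta X}]/e^{\theta t}$ and combining with the tilt identity immediately produces the claimed estimate \eqref{eq:tail-point-estimate}.

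There is no real obstacle in this argument: the heavy lifting has already been done in \Cref{lem:density-geometric} and in the algebraic preparation that precedes the theorem. The only conceptual point worth emphasizing is that strong monotonicity of $X$ is exactly tailored so that the tilted variable $Y$ again satisfies our near-mean estimate, and we apply that estimate precisely at $t = \E[Y]$, where the bound is strongest.
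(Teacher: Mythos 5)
Your proposal is correct and follows the same route as the paper: exploit the tilt identity $\Pr[X=t]=\frac{\E[e^{\theta X}]}{e^{\theta t}}\Pr[Y=t]$, note $\E[Y]=t$ so \Cref{lem:density-geometric} applies to $Y$ with the parameter $t=0$ (Gaussian factor collapsing to $\frac{1}{\sqrt{2\pi}\sigma_Y}$), and multiply through. You also correctly identify that strong monotonicity is invoked precisely to guarantee that $Y=X_\theta$ is monotone, which is exactly what the paper's argument relies on.
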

\begin{remark} We note that if either $A=\essinf X\neq -\infty$ or $B=\esssup X\neq \infty$, then $[0,\infty)\subset I$ or $(-\infty, 0]\subset I$, respectively, and we can therefore always apply the exponential tilt in the lemma. We moreover, not that for $t<A$ and $t>B$, it trivially holds that $\Pr[X=t]=0$ and it is an easy exercise to show that 
\begin{align*}
\Pr[X=A]=\prod_{j\in [k]}\Pr[X_j=\essinf X_j], \quad \text{and} \quad \Pr[X=B]=\prod_{j\in [k]}\Pr[X_j=\esssup X_j],
\end{align*}
whenever $A\neq -\infty$ and $B\neq \infty$. Even though the lemma does not provide estimates for these probabilities, they are therefore usually easy to determine for concrete families of random variables.  
\end{remark}
To apply~\Cref{lemma:tail-points}, for $X=\sum_{j\in [k]}X_j$ a concrete sum of strongly monotone random variables, say geometric variables, we would calculate $\psi_X$ and find the unique $\theta$ with $\psi_X'(\theta)=t$. We would then determine the tilted random variables $(Y_j)_{j\in [k]}$. Typically $Y_j$ comes from the same family of random variables as $X_j$, e.g., an exponential tilt of respectively a Bernoulli, geometric, and Poisson variable is again Bernoulli, geometric and Poisson. We would then determine the quantities $\eta_Y$ and $\sigma_Y^2$ and plug into~\cref{eq:tail-point-estimate}.
\begin{example}
Let $X=\sum_{j\in [k]}X_j$, where $(X_j)_{j\in [k]}$ are independent Bernoulli variables with $\Pr[X_j=1]=p_j$. We want to estimate the $\Pr[X=t]$ for some $0< t <k-1$. For this, we define $\theta$, $(Y_j)_{j\in [k]}$ and $Y$ as in~\Cref{lemma:tail-points}. Then each $Y_j$ is again Bernoulli. If $\Pr[Y_j=1]=q_j$, then $\E[|Y_j-\E[Y_j]|^3]=q_j(1-q_j)(q_j^2+(1-q_j)^2)\leq \Var[Y_j]$, so that $\eta_Y\leq \sigma_Y^2$. Thus, the bound of~\eqref{eq:tail-point-estimate} becomes
\begin{align*}
\Pr[X=t]=\frac{1}{\sqrt{2\pi}\sigma_Y}\frac{\E[e^{\theta X}]}{e^{\theta t}} \left(1\pm O\left(\frac{1}{\sigma_Y}\right)\right),
\end{align*}
If $\sigma_Y$ is larger than a sufficiently large constant, the estimate is therefore asymptotically tight. Consider as a very simple example\footnote{This example is to be seen as a proof of concept as one can of course obtain precise asymptotically tight estimates directly using Stirling's formula.}, the case where the Bernoulli variables $(X_j)_{j\in [k]}$ are identically distributed. Then the same holds for the $(Y_j)_{j\in [k]}$, and since $\E[Y]=t$, we must have that $\sigma_Y^2=t(1-t/k)$. It follows that the bound above is asymptotically tight whenever $C<t<k-C$ for a sufficiently large universal constant $C$.
\end{example}
\begin{example}
Let $X=\sum_{j\in [k]}X_j$ be a sum of independent geometric variables such that for some probabilities $(p_j)_{j\in [k]}$ and each $s\in \N_0$, $\Pr[X_j=s]=p_j^s(1-p_j)$. Let $\mu_j=\E[X_j]=\frac{p_j}{1-p_j}$ for $j\in [k]$ and $\mu=\E[X]$. Assume that $\mu_j=O(1)$ for $j\in [k]$.
We want to estimate $\Pr[X=t]$ for some integer $t>0$ using~\Cref{lemma:tail-points}, and we define $\theta$, $(Y_j)_{j\in [k]}$ and $Y$ accordingly. For simplicity, we will assume that $t=O(\E[X])$. By~\Cref{prop:strong-monotone-ex}, each $Y_j$ is again geometric, say with $\Pr[Y_j=s]=q_j^s(1-q_j)$ for $s\in \N_0$. Moreover, since $\E[X_j]=O(1)$ for $j\in [k]$ and $t=O(\E[X])$, it follows that also $\E[Y_j]=O(1)$ for $j\in [k]$. Now simple calculations yields that $\Var[Y_j]=\Theta(\E[Y_j])$ and $\E[|Y_j-\E[Y_j]|^3]=\Theta(\E[Y_j])$. Plugging into~\cref{eq:tail-point-estimate}, we thus obtain that
\begin{align*}
\Pr[X=t]=\frac{1}{\sqrt{2\pi}\sigma_Y}\frac{\E[e^{\theta X}]}{e^{\theta t}} \left(1\pm O\left(\frac{1}{\sqrt{\mu_Y}}\right)\right),
\end{align*}
where $\mu_Y=\E[Y]=t$. In particular, the bound is asymptotically tight whenever $t\geq C$ for a sufficiently large constant $C$.
\end{example}



\bibliographystyle{acm}
\bibliography{fourier.bib}

\end{document}